\theoremstyle{plain}
\newtheorem{theorem}{Theorem}[section]
\newtheorem{corollary}{Corollary}[theorem]
\newtheorem{lemma}[theorem]{Lemma}
\newtheorem{algorithm}{Algorithm}[section]
\theoremstyle{definition}
\newtheorem{definition}[theorem]{Definition}
\providecommand{\customgenericname}{}
\newcommand{\newcustomtheorem}[2]{%
  \newenvironment{#1}[1]
  {%
   \renewcommand\customgenericname{#2}%
   \renewcommand\theinnercustomgeneric{##1}%
   \innercustomgeneric
  }
  {\endinnercustomgeneric}
}
\newtheorem*{theorem*}{Theorem}
\newcommand{\R}{\mathbb{R}}
\newcommand{\Z}{\mathbb{Z}}
\newcommand{\N}{\mathbb{N}}
\newcommand{\proj}{\text{proj}}
\newcommand{\inv}[1]{{#1}^{-1}}
\newcommand{\abs}[1]{\left\lvert#1\right\rvert}
\newcommand{\dimh}{\textrm{dim}_{H}}
\newcommand{\dimm}{\textrm{dim}_{M}}
\newcommand{\dimup}{\textrm{dim}_{M^*}}
\newcommand{\nstdlm}[1]{{\underset{#1}{\textrm{lim}}}}
\title{The Fractal Dimension of Product Sets}
\author{{Machiel van Frankenhuijsen}\thanks{http://research.uvu.edu/machiel/index.html} \\
	Department of Mathematics\\
	Utah Valley University\\
	Orem, UT  84058 \\
	\texttt{vanframa@.edu} \\
	\And
	\href{https://orcid.org/0000-0002-3764-6361}{\includegraphics[scale=0.06]{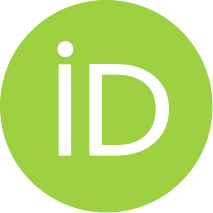}\hspace{1mm}Clayton Williams} \\
	Department of Mathematics\\
	Brigham Young University\\
	Provo, UT   84606\\
	\texttt{clayton.williams@mathematics.byu.edu} \\
}
\begin{document}
\maketitle

\begin{abstract}
Using ultraproduct techniques we define a nonstandard Minkowski dimension which exists for all bounded sets and which has the property that $\dim(A\times B)=\dim(A)+\dim(B).$ That is, our new dimension is product-summable. To illustrate our theorem we generalize an example of Falconer's to show that the standard upper Minkowski dimension, as well as the Hausdorff dimension, are not product-summable.
\end{abstract}

\keywords{fractal geometry, Minkowski dimension, ultralimits}

\section{Introduction}

There are several types of dimension used in fractal geometry, which coincide for some sets but have important, distinct properties. Indeed, determining the most proper notion of dimension has been a major problem in geometric measure theory from its inception, and debate over what it means for a set to be fractal has often reduced to debate over the proper notion of dimension. A classical example is the \say{Devil's Staircase}, which is intuitively fractal but which has integer Hausdorff dimension \cite[page 82]{mandelbrotnature},\cite[page 337]{lapvanf2013}. As Falconer notes in \textit{The Geometry of Fractal Sets}, \cite[pages ix-x]{falconer1986geometry}, the Hausdorff dimension is \say{undoubtedly, the most widely investigated and most widely used} type of dimension. It can, however, be difficult to compute or even bound (from below) for many sets. For this reason one might want to work with the Minkowski dimension, for which one can often obtain explicit formulas. Moreover, the Minkowski dimension has many useful identities. In particular, the Minkowski dimension of a product set is the sum of the dimensions of the factoring sets. We define this idea below for clarity.\par 

\begin{definition}
Let $f$ be a function defined for all bounded subsets of $\R^k$ for all $k.$ Then $f$ is \textit{product-summable} if $f(A)+f(B)=f(A\times B),$ where $A\times B\subset\R^{m+n}$ is the Cartesian product of $A$ and $B,$ for all such bounded subsets $A\subset \R^m$ and $B\subset\R^n$.
\end{definition}

Product-summability is a desirable property for any dimension, by analogy with boxes in $\R^n$. The Hausdorff dimension is not product-summable, as shown in \cite{marstrand1954prod}, nor is the upper Minkowski dimension. The Minkowski dimension is product-summable, as shown in \cite[corollary 2.5]{robinson2013strict}, but is unfortunately defined in terms of a limit which does not exist for all sets. In this paper we introduce a nonstandard extension of the Minkowski dimension which exists for all bounded sets, agreeing with the standard Minkowski dimension whenever it exists. Moreover, this nonstandard Minkowski dimension is product summable. We make the argument that the nonstandard Minkowski dimension is an appropriate tool of analysis for certain problems in geometric measure theory.\par

This is not the first treatment of product problems in fractal geometry. Freilich and Marstrand both discussed this problem in the context of the Hausdorff dimension, more recently Robinson and Sharples construct pathological products for the upper and lower Minkowski dimensions \cite{freilich1950measure},\cite{marstrand1954prod},\cite[theorem 5.11]{falconer1986geometry},\cite{robinson2013strict}. Our main result, theorem \ref{nonstdmink product summable}, generalizes the Minkowski dimension to one which is product-summable, exists for all bounded sets, and agrees with the standard Minkwoski dimension when it exists.\par 

Nonstandard analysis has been applied to geometric measure theory by previous authors. For example, Potgieter extends the Hausdorff measure to the nonstandard universe (in so doing introducing a nonstandard Hausdorff dimension) \cite{potgieter}. Potgieter introduces the nonstandard Hausdorff dimension for reasons differing from our own --- he does so because it allows for more intuitive proofs of some classical results. In our case we use an ultralimit to extend the Minkowski dimension to sets which do not have a standard Minkowski dimension. Because the Hausdorff dimension exists for every set our motivation does not apply to nonstandard extensions of the Hausdorff dimension. Potgieter's paper, however, shares a similarity with our own in that it gives a nonstandard definition of the Hausdorff dimension agreeing with the standard dimension \cite[theorems 4.3, 4.4]{potgieter}, while we define a nonstandard Minkowski dimension agreeing with the standard one when it exists.\par

Because this subject is of interest to both geometers and logicians we define most of the elementary ideas for clarity. In particular, we introduce ultralimits in an intuitive and conceptually transparent way based on Terence Tao's pedagogical discussion in \textit{Ultrafilters, Nonstandard Analysis, and Epsilon Management} \cite{tao_2007}. In our work we need only ultralimits and do not construct a nonstandard model.\par

We begin by proving the product-summability of the standard Minkowski dimension. We then introduce ultrafilters and nonstandard limits in section \ref{ultrasection}, developing only the tools we will use, and define the nonstandard Minkowski dimension. Our main result, theorem \ref{nonstdmink product summable}, establishes the product-summability of the nonstandard Minkowski dimension from its definition and a covering lemma, lemma \ref{lemma s atimesb}.  In section \ref{sets not product summable section} we provide an interesting generalization of an example of Falconer's, \cite[page 73]{falconer1986geometry}, for which the Hausdorff dimension of a product is not the sum of the dimensions of the factoring sets. The standard Minkowski dimension does not exist for these sets, which also provide an example for when the upper Minkowski and Hausdorff dimensions are not product-summable.

\section{The Standard Minkowski Dimension of a Product}

Recall that a set $A$ is totally bounded if for every $\varepsilon>0$ there exists a finite cover of $A$ by $\varepsilon$-balls, and that totally bounded sets are equivalent to bounded sets in Euclidean space \cite[section 1.1]{bishop2017fractals}. To proceed we define the  Minkowski dimension, which is computed using sets of fixed size. We introduce the Minkowski dimension via the covering number $N_A(\varepsilon).$ 

\begin{definition}\label{coveringnum}
Let $A$ be a bounded subset of $\R^n$ and $\varepsilon>0.$ Then the \textit{covering number} $N_A(\varepsilon)$ is the minimum number of cubes of side length $\varepsilon$ in $\R^n$ needed to cover $A.$
\end{definition}

Note that the Minkowski dimension can be computed using balls or cubes \cite{bishop2017fractals}. We can now define the standard Minkowski dimension.
\begin{definition}\label{mink}
Let $A\subset\R^n$ be bounded, $\varepsilon>0,$ and $N_A(\varepsilon)$ be its covering number. The \textit{Minkowski} dimension of $A,$ $\dimm(A)$, if the limit exists, is
\begin{align}
    \dimm(A) &= \lim\limits_{\varepsilon\to 0^+}\frac{\log N_A(\varepsilon)}{\log\frac{1}{\varepsilon}}.\label{minkup}
\end{align}
Define the upper Minkowski dimension similarly as $\dimup(A) = \limsup\limits_{\varepsilon\to 0^+}\frac{\log N_A(\varepsilon)}{\log\frac{1}{\varepsilon}}$; this exists for all bounded sets and equals the standard Minkowski dimension when it exists.
\end{definition}

In this section we show the standard Minkowski dimension of a Cartesian product is equal to the sum of the Minkowski dimensions of the factors, when the dimensions exist. Hence the Minkowski dimension is product-summable.\par

While coverings by balls are conceptually simple, coverings by regular $n$-cubes are often more convenient computationally (particularly when considering products, since the product of cubes is a higher dimensional cube). Indeed, we can go further and restrict our attention to dyadic cubes of fixed side length and position. Doing so gives us great control over the cover without sacrificing anything in terms of the behavior of the fractal sets we are able to observe, introducing only a constant into our computations.\footnote{The cover by dyadic cubes of fixed size used here, $\mathcal{N}(2^{-k}),$ is a subset of the more general cover Falconer uses to prove the Hausdorff dimension is not product-summable. Indeed, using general dyadic cubes is equivalent to using arbitrary covers for the purpose of computing the Hausdorff dimension. See Falconer \cite[chapter 5]{falconer1986geometry}.}\par

\begin{lemma}\label{doubling lemma}
Let $A\subset\R^n$ be bounded and $\varepsilon>0.$ Then
\begin{align*}
    N_A(\varepsilon)\leq 2^nN_A(2\varepsilon).
\end{align*}
\end{lemma}
\begin{proof}
It takes exactly $2$ intervals in $\R$ of length $\varepsilon$ to cover an interval of length $2\varepsilon,$ hence it takes no more than $2^n$ cubes in $\R^n$ of side length $\varepsilon$ to cover a subset of a cube of side length $2\varepsilon.$ Then a minimal cover of $A$ by cubes of side length $\varepsilon$ contains no more than $2^nN_A(2\varepsilon)$ cubes. 
\end{proof}
Using the above doubling lemma we can show the Minkowski dimension can be computed using dyadic cubes. Let $\mathscr{N}(2^{-k})$ be the set of dyadic cubes of side length $2^{-k}$ in $\R^k,$ that is, $\mathscr{N}(\varepsilon) = \{[m_12^{-k},(m_1+1)2^{-k})\times...\times[m_n2^{-k},(m_n+1)2^{-k}):m_j\in \Z\}.$ Define $S_A(2^{-k})$ to be the minimal number of cubes in $\mathscr{N}(2^{-k})$ covering $A.$

\begin{lemma}\label{dyadic cubes lemma}
Let $A\subset\R^n$ be bounded.  Then $
   2^{-n} N_A(2^{-k})\leq
    S_A(2^{-k})\leq 2^nN_A(2^{-k})$.
\end{lemma}

\begin{proof}
For $\varepsilon>0$ let $k\in\Z$ be such that $2^{-(k+1)}<\varepsilon\leq 2^{-k}.$ We have $N_A(\varepsilon)\leq 2^nN_A(2\varepsilon),$ similarly $S_A(2^{-(k+1)})\leq 2^nS_A(2^{-k}).$ Since $2^{-(k+1)}<\varepsilon\leq 2^{-k}$ and $S_A(2^{-k})$ is computed over a more restrictive type of cover than $N_A(\varepsilon),$ we have $N_A(2^{-k})\leq N_A(\varepsilon)\leq N_A(2^{-(k+1)})\leq S_A(2^{-(k+1)}).$ Then $2^{-n}N_A(\varepsilon)\leq N_A(2\varepsilon)\leq S_A(2^{-k}).$\par

In the other direction we have $S_A(2^{-k})\leq 2^{n}N_A(\varepsilon)$, since an interval in $\R$ of length $\varepsilon$ can be covered with two intervals of length $2^{-k}$. 
\end{proof}

We can now show the standard Minkowski dimension can be computed using covers by dyadic cubes. Dyadic cubes have the particularly useful net property, in which two dyadic cubes are either disjoint or one is a subset of the other. Because we're restricting our attention to cubes of constant size the dyadic cubes in $\mathscr{N}(\varepsilon)$ are either disjoint or equal, forming a regular tiling of $\R^n.$ Other approaches to geometric measure theory, such as the Hausdorff measure, take the infimum over all covers with diameter bounded above by $\varepsilon$. The Hausdorff $s$-dimensional outer measure can also be approximated by dyadic cubes, the so-called net measure, meaning the Hausdorff dimension can be computed using covers by dyadic cubes \cite[pages 64-65]{falconer1986geometry}.\par

\begin{lemma}\label{compute std mink by dyadic cubes lemma}
Let $A\subset\R^n$ be bounded, $\varepsilon>0,$ and $k\in \Z$ be such that $2^{-(k+1)}<\varepsilon\leq 2^{-k}.$ Let $S_A(2^{-k})$ be as above. Then if $\dimm(A)$ exists we have
\begin{align*}
    \dimm(A)=\lim_{\varepsilon\to0^+}\frac{\log S_{A}(2^{-k})}{\log\frac{1}{\varepsilon}}.
\end{align*}
\end{lemma}
\begin{proof}
By lemma \ref{dyadic cubes lemma} we have $\log\left( 2^{-n}N_A(2^{-k})\right) \leq \log\left( S_A(2^{-k})\right)\leq \log\left( 2^{n}N_A(2^{-k})\right).$ Since $\lim_{\varepsilon\to0^+}\frac{\log 2^n}{\log\frac{1}{\varepsilon}}=0$ the result follows.
\end{proof}

The net property of dyadic cubes allows us to prove that when the standard Minkowski dimension exists it is product summable. To do so we will need a lemma on covers of a product set.

\begin{lemma}\label{lemma s atimesb}
For bounded sets $A\subset\R^m$ and $B\subset\R^n$ $S_{A\times B}(2^{-k})=S_A(2^{-k})\times S_B(2^{-k}).$
\end{lemma}

\begin{proof}
Let $\{U_i\}$ be a cover of $A$ by $S_A(2^{-k})$ dyadic cubes of side length $2^{-k}$. Let $f_a$ be the fiber in $A\times B\subset\R^{m+n}$ of $a$ in $A$. For each fiber $f_a$ a minimum of $S_B(2^{-k})$ dyadic cubes of side length $2^{-k}$ is necessary to cover $f_a$. For each $U_i$ pick an $a_i\in U_i\cap A$, note $a_i\not\in U_j$ if $i\neq j.$ Hence there are $S_A(2^{-k})$ such points $a_i.$ Covering all $f_{a_i}$ by $S_B(2^{-k})$ dyadic cubes requires then $S_A(2^{-k})S_B(2^{-k})$ dyadic cubes. Since these cubes are disjoint no more efficient cover is possible at this size, hence $S_{A\times B}(2^{-k})= S_A(2^{-k})S_B(2^{-k}).$
\end{proof}

Lemmas \ref{compute std mink by dyadic cubes lemma} and \ref{lemma s atimesb} give us the product-summability of the standard Minkowski dimension.

\begin{corollary}\label{minkdimprodeq}
Let $A\subset\R^m,B\subset\R^n$ be bounded sets with Minkowski dimensions $\dimm(A)$ and $\dimm(B)$ respectively. Then $\dimm(A\times B) = \dimm(A)+\dimm(B).$
\end{corollary}

\section{The Non-standard Minkowski Dimension}\label{ultrasection}
In this section we will show that it is possible to extend the standard Minkowski dimension so it exists for all bounded subsets of $\R^n$ while retaining the product-summability property. Before doing so it is appropriate to remark that the upper Minkowski dimension is not such an extension, as will be shown in section \ref{sets not product summable section}. In order to properly extend the standard Minkowski dimension we will use some ultraproduct techniques.

\subsection{Filters and Ultrafilters}
A proper filter on an infinite set is a set of so-called \say{large} subsets, in a way made precise below.
\begin{definition}
A proper filter $\mathcal{Q}$ on a set $X$ is a proper subset of the power set of $X,$ $P(X)$, such that
\begin{enumerate}
    \item $X\in\mathcal{Q}$,
    \item If $A,B\in\mathcal{Q}$ then $A\cap B\in\mathcal{Q},$
    \item If $A\in\mathcal{Q}$ and $B\supseteq A$ then $B\in \mathcal{Q}.$
\end{enumerate}

\end{definition}

An ultrafilter is a proper filter which is maximal, that is, every subset of $P(X)$ is either \say{large} or \say{small}. An example of such a filter can be given as follows: let $a\in X$, then the set $\mathcal{Q}_a = \{A\subset X: a\in A\}$ has the property that for any subset $B$ of $X$, either $B\in \mathcal{Q}_a$ or $X\setminus B\in \mathcal{Q}_a.$ This type of filter is called a principal ultrafilter. We are interested in excluding this class of filter, in particular, the types of ultrafilters we are interested in also have the additional restriction that they contain all co-finite sets.  

\begin{definition}\label{defintion nonprinc uf}
A \textit{non-principal ultrafilter} $\mathcal{Q}$ on a set $X$ is a filter with the following properties:\cite[pages 204-205]{hrbacek1984introduction}
\begin{enumerate}
\setcounter{enumi}{3}
    \item For any subset $A$ of $X$, $A\in \mathcal{Q}$ or $X\setminus A\in \mathcal{Q}$,
    \item If $A\in \mathcal{Q}$ and $B$ is finite then $A\setminus B\in \mathcal{Q}.$
\end{enumerate}
\end{definition}

We note a basic property of ultrafilters.
\begin{lemma}
If $A\cup B\in \mathcal{Q}$, $\mathcal{Q}\subset P(X)$ an ultrafilter, then either $A\in \mathcal{Q}$ or $B\in \mathcal{Q}.$
\end{lemma}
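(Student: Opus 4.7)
The plan is to prove this by contradiction (or contrapositive): assume $A \cup B \in \mathcal{Q}$ but $A \notin \mathcal{Q}$, and deduce that $B \in \mathcal{Q}$. The three ultrafilter axioms listed (monotonicity, closure under intersection, dichotomy) are exactly what's needed, so the proof will be short and mechanical once the right intersection is identified.

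First I would invoke dichotomy: since $A \notin \mathcal{Q}$, we must have $X - A \in \mathcal{Q}$. Next I would apply closure under intersection to the two sets known to lie in $\mathcal{Q}$, namely $A \cup B$ and $X - A$, obtaining
\begin{equation*}
(A \cup B) \cap (X - A) \in \mathcal{Q}.
\end{equation*}
A direct set-theoretic simplification gives $(A \cup B) \cap (X - A) = B \cap (X - A) \subseteq B$. Finally, monotonicity promotes this to $B \in \mathcal{Q}$, which is the desired conclusion.

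I do not foresee a real obstacle here; the only thing to be careful about is that the argument uses only properties 1, 2, and 3 in the definition, so the lemma holds for any ultrafilter and not merely for non-principal or proper ones. If desired, one can present the result symmetrically: by the same argument, if $B \notin \mathcal{Q}$ then $A \in \mathcal{Q}$, so at least one of $A, B$ must lie in $\mathcal{Q}$.
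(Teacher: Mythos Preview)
Your proof is correct and follows essentially the same approach as the paper: both apply dichotomy to pass to complements and then use closure under intersection. The only cosmetic difference is that the paper argues by contradiction (assuming neither $A$ nor $B$ lies in $\mathcal{Q}$, obtaining $X-A,\,X-B\in\mathcal{Q}$, and intersecting via De~Morgan to get $X-(A\cup B)\in\mathcal{Q}$), whereas you intersect $A\cup B$ directly with $X-A$ and finish with monotonicity.
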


\begin{proof}
Suppose neither $A$ nor $B$ is in $\mathcal{Q}.$ Then $X\setminus A\in \mathcal{Q}$ and $X\setminus B\in \mathcal{Q},$ by property 4 of definition \ref{defintion nonprinc uf}. Hence $X\setminus (A\cup B) = (X\setminus A)\cap (X\setminus B)\in \mathcal{Q}$, hence $A\cup B\not\in \mathcal{Q}.$
\end{proof}

The following lemma will be useful in proving the existence of ultrafilters over any infinite set\cite[section 1.2]{garcia2012filters}.
\begin{lemma}
The union of a chain of filters is again a filter.
\end{lemma}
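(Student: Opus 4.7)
The plan is to unfold the definition: a chain of filters $\{\mathcal{F}_\alpha\}_{\alpha \in I}$ on $X$ is a collection totally ordered by inclusion, and I want to show that $\mathcal{F} := \bigcup_{\alpha \in I} \mathcal{F}_\alpha$ satisfies properties 1 and 2 of the filter definition. So I would pick arbitrary members of $\mathcal{F}$, trace them back to whichever $\mathcal{F}_\alpha$ they came from, and then use the chain property to pass to a common filter in which both already live.

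First I would verify monotonicity. Suppose $A \in \mathcal{F}$ and $A \subset B$. By definition of the union, $A \in \mathcal{F}_\alpha$ for some $\alpha$. Since $\mathcal{F}_\alpha$ is itself a filter and satisfies property 1, we get $B \in \mathcal{F}_\alpha \subseteq \mathcal{F}$, which is what we wanted.

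Next I would verify closure under intersection, and this is the step where the chain hypothesis actually gets used. Suppose $A, B \in \mathcal{F}$. Then $A \in \mathcal{F}_\alpha$ and $B \in \mathcal{F}_\beta$ for some indices $\alpha, \beta \in I$. Because the family is a chain, either $\mathcal{F}_\alpha \subseteq \mathcal{F}_\beta$ or $\mathcal{F}_\beta \subseteq \mathcal{F}_\alpha$; without loss of generality the former holds, so both $A$ and $B$ lie in $\mathcal{F}_\beta$. Property 2 applied to $\mathcal{F}_\beta$ gives $A \cap B \in \mathcal{F}_\beta \subseteq \mathcal{F}$.

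The only real subtlety is the middle step: if we dropped the chain assumption and allowed an arbitrary union of filters, closure under intersection would fail because $A$ and $B$ could come from incomparable filters whose intersection property has no common witness. So the one conceptual point I would flag in writing up the proof is that the chain condition is precisely what supplies a common filter containing both $A$ and $B$; everything else is a direct unwinding of definitions, and no clever construction is needed. This lemma is then the Zorn's-lemma input one expects to see in the subsequent existence proof for non-principal ultrafilters.
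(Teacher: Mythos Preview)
Your proof is correct and follows essentially the same approach as the paper: trace elements of the union back to individual filters in the chain, use comparability to land in a common filter, and apply that filter's closure property. Your verification of monotonicity is in fact slightly cleaner than the paper's, since you observe that the chain hypothesis is not needed for property~1 (it suffices that $A$ lies in some $\mathcal{F}_\alpha$), whereas the paper's argument passes through $\mathcal{F}_{\max\{i,j\}}$ for both properties.
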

\begin{proof}
Let $\{\mathcal{F}_i\}$ be a chain of filters partially ordered by set inclusion, so that $\mathcal{F}_i\subset\mathcal{F}_j$ if $j>i.$ Let $A,B\in \cup_i\mathcal{F}_i$. Then $A\in \mathcal{F}_i$, $B\in\mathcal{F}_j$ for some $i,j.$ Hence $A\cup B\in \mathcal{F}_{\max\{i,j\}}$, since the chain is partially ordered. Hence $A\cup B\in \mathcal{F}_i$ or $\mathcal{F}_j.$\par

Now to show the union has property 2, let $A,B\in \cup_i \mathcal{F}_i.$ As before, $A\in \mathcal{F}_i$, $B\in\mathcal{F}_j$ for some $i,j,$ so $A, B$ are both in $\mathcal{F}_{\max\{i,j\}}.$ Since this is a filter $A\cap B\in \mathcal{F}_{\max\{i,j\}}\subset \cup_i\mathcal{F}_i.$
\end{proof}

Note that the set of all cofinite sets is a non-principal, proper filter on $X$. This filter is called the Fr\'echet Filter, and any filter containing this filter is called a free filter. We can prove the existence of a non-principal free ultrafilter on any infinite set using Zorn's Lemma \cite[page 205]{hrbacek1984introduction}\cite{garcia2012filters}. The proof of the existence of an ultrafilter over $X$, an infinite set, follows from a dichotomy property guaranteeing that a maximal proper filter on $X$ is an ultrafilter, as shown below.

\begin{lemma}
There exists a non-principal ultrafilter on $X$, $X$ an infinite set.
\end{lemma}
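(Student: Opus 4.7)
The plan is to apply Zorn's Lemma to the poset of proper filters on $X$ that contain the Fréchet filter $\mathcal{F}$, ordered by inclusion. This collection is nonempty because $\mathcal{F}$ itself is a proper filter (since $X$ is infinite, $\varnothing$ is not cofinite, so $\varnothing\notin\mathcal{F}$). Given any chain in this poset, its union is again a filter by the previous lemma, it still contains $\mathcal{F}$, and it is still proper because $\varnothing$ belongs to none of the members of the chain. Hence every chain has an upper bound, and Zorn's Lemma produces a maximal element $\mathcal{Q}$.

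The main work is to verify that this maximal $\mathcal{Q}$ satisfies the dichotomy property 3. Suppose for contradiction that there is an $A\subset X$ with neither $A\in\mathcal{Q}$ nor $X-A\in\mathcal{Q}$. I would then define
\begin{equation*}
\mathcal{Q}' = \{\, B\subset X \mid B\supset A\cap F \text{ for some } F\in\mathcal{Q}\,\}
\end{equation*}
and check that $\mathcal{Q}'$ is a filter containing $\mathcal{Q}\cup\{A\}$, hence strictly larger than $\mathcal{Q}$. The only delicate point is that $\mathcal{Q}'$ is proper: if $\varnothing\in\mathcal{Q}'$, then $A\cap F=\varnothing$ for some $F\in\mathcal{Q}$, so $F\subset X-A$, and monotonicity of $\mathcal{Q}$ would force $X-A\in\mathcal{Q}$, a contradiction. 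Since $\mathcal{Q}\subset\mathcal{F}\subset\mathcal{Q}'$ (well, more precisely $\mathcal{F}\subset\mathcal{Q}\subset\mathcal{Q}'$) and $\mathcal{Q}'$ is proper, this contradicts maximality of $\mathcal{Q}$. Therefore the dichotomy property holds.

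Finally I would verify properties 4 and 5 almost for free from the fact that $\mathcal{F}\subset\mathcal{Q}$. For property 5, $\mathcal{Q}$ is proper by construction. For property 4, if $A\in\mathcal{Q}$ and $B$ is finite, then $X-B$ is cofinite, hence $X-B\in\mathcal{F}\subset\mathcal{Q}$, and so $A-B=A\cap(X-B)\in\mathcal{Q}$ by closure under intersection. This also shows $\mathcal{Q}$ is non-principal in the usual sense: for each $x\in X$, the cofinite set $X-\{x\}$ lies in $\mathcal{Q}$, so the intersection of the members of $\mathcal{Q}$ is empty and $\mathcal{Q}$ is not of the form $\{S:x_0\in S\}$.

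I expect the main obstacle to be presenting the extension argument for dichotomy cleanly and making sure the candidate extension $\mathcal{Q}'$ is verified to be a proper filter; everything else is either Zorn bookkeeping or a one-line consequence of containing the Fréchet filter.
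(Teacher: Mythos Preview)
Your proposal is correct and follows essentially the same approach as the paper: apply Zorn's Lemma to the poset of free (i.e., Fr\'echet-containing) proper filters, obtain a maximal element $\mathcal{Q}$, and then derive the dichotomy property by constructing a strictly larger filter from a hypothetical $A$ with $A,\,X-A\notin\mathcal{Q}$. The only cosmetic difference is in the extension step: you adjoin $A$ via the generated filter $\mathcal{Q}'=\{B\mid B\supset A\cap F\text{ for some }F\in\mathcal{Q}\}$, whereas the paper adjoins $X-A$ via the dual construction $T=\{U\mid A\cup U\in\mathcal{Q}\}$; both are standard and lead to the same contradiction, and your explicit check that $\mathcal{Q}'$ is proper (using $X-A\notin\mathcal{Q}$) is exactly the point the paper leaves implicit.
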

\begin{proof}
Let $F$ be the set of free filters on $X$. Note $F$ is not empty as the Fr\'echet filter is in $F.$ We can partially order $F$ by set inclusion. Since the union of any chain of free filters is again a free filter, each chain is bounded above by the union of all the filters in the chain. Hence by Zorn's lemma there exists a maximal filter in $F.$ Call it $\mathcal{Q}.$ We show $\mathcal{Q}$ is a non-principle ultrafilter. To do so we need show only property 4 of definition \ref{defintion nonprinc uf} above, as $\mathcal{Q}$ is in $F$ and hence a filter.\par

Note for all $A\subset X,$ $A\cup (X\setminus A)\in \mathcal{Q}.$ Suppose for contradiction neither $A$ nor $X\setminus A$ is in $\mathcal{Q}.$ If this is the case, we claim $T = \{U\in P(X)| A\cup U\in \mathcal{Q}\}$ is a free filter properly containing $\mathcal{Q}$.
\begin{itemize}
    \item $T$ is a filter.
    \begin{itemize}
        \item Monotonicity: Let $\alpha\in T,$ and $\alpha\subset\beta.$ Then $A\cup\alpha\in \mathcal{Q}$. But $A\cup\alpha\subset A\cup\beta$, so by the monotonicity of $\mathcal{Q}$ we have $A\cup\beta\in\mathcal{Q}$. Hence $\beta\in T$.
        \item Closure under intersection: If $\alpha,\beta\in T$ then $A\cup \alpha\in \mathcal{Q}$, and similarly for $\beta.$ Then $A\cup(\alpha \cap \beta)=(A\cup \alpha)\cap(A\cup \beta)\in \mathcal{Q}$ since $\mathcal{Q}$ is a filter. Hence $\alpha\cap\beta\in T.$
    \end{itemize}
    \item $\mathcal{Q}\subseteq T:$ Let $\alpha\in \mathcal{Q}.$ Then by monontonicity $A\cup \alpha\in \mathcal{Q}$, so $\alpha\in T.$ 
    \item $T$ properly contains $\mathcal{Q}:$ This is clear from the fact that $X = A\cup (X\setminus A),$ hence $X\setminus A\in T$ but $X\setminus A\not\in\mathcal{Q}.$
\end{itemize}
Hence, supposing $\mathcal{Q}$ does not have the dichotomy property we come to a contradiction, namely, that $Q$ is not a maximal filter. Therefore $Q$ has the dichotomy property, property 4 of definition \ref{defintion nonprinc uf} above.
\end{proof}

\subsection{Using Ultrafilters to Define Limits}
Now that we have an ultrafilter we can define limits of sequences and functions. We first give an intuitive, algorithmic definition of the limit of a sequence, as in \cite{tao_2007}, then show this is equivalent to a more convenient definition.\par

\begin{algorithm}[Computing the Non-standard Limits of Bounded Sequences]\label{algorithm defn of nonstd limit}

Let $\{a_n\}\subset \R$ be a bounded sequence, $a_n\in (x_1,z_1)$ for all $n.$ We can view $\{a_n\}$ as a function $a:\N\to \R.$ Let $\mathcal{Q}$ be a non-principal ultrafilter on $\N.$ Compute the $\mathcal{Q}$-limit of $\{a_n\}$ by choosing $y_1\in (x_1,z_1)$ and determining whether $\inv{a}((x_1,y_1))=U_1$ or $\inv{a}([y_1,z_1))=V_1$ is in $\mathcal{Q}.$ Note that $U_1\cap V_1=\varnothing$, so only one of $U_1$ or $V_1$ is in $\mathcal{Q}$. (Moreover, $U_1\cup V_1=\N$). Say $U_1\in \mathcal{Q}.$ Repeat with a $y_i\in (U_{i-1})$, obtaining a sequence of nested intervals $\{U_i\}$ with width tending to $0$ for which $\inv{a}(U_i)\in \mathcal{Q}.$ The limit of this sequence of intervals is a point, which is the non-standard limit or $\mathcal{Q}$-limit denoted by $\nstdlm{\mathcal{Q}}(a_n).$ If this non-standard limit equals $l$, we may write $\{a_n\}\underset{\mathcal{Q}}{\rightarrow}l.$
\end{algorithm}

For this algorithm to be useful, it will have to yield a unique limit regardless of the choice of intervals used to compute it. Fortunately it does. Consider sequences of nested intervals $\{U_i\}\to l$ and $\{V_i\}\to l'.$ If these 2 sequences yield the same limit using the above algorithm, then computing the limit using $U_i\cap V_i$ shows that $l=l'.$

It is important to note that the limit of the sequence is not shift-invariant, as shifting the index may affect which inverse image is in $\mathcal{Q}$ \cite{tao_2007}. Therefore the non-standard limit is not, in general, characterized sequentially for functions, and we will have to introduce other tools to define the limits of functions. The above lemma motivates the introduction of a more convenient characterization of the sequential $\mathcal{Q}$-limit of a sequence.\footnote{See, for example, \cite[page 590]{kapovich1995asymptotic}. Compare with \cite[definition 2.4]{gromov1996geometric}.}

\begin{theorem}\label{inverse image characterization of nonstandard limits}
Let $\{a_n\}$ be a bounded sequence, and $\mathcal{Q}$ be a nonprincipal ultrafilter on $\N.$ Then $\nstdlm{\mathcal{Q}}(a_n) = l$ if and only if, for all open intervals $U$ containing $l$, $\inv{a}(U)\in \mathcal{Q}.$
\end{theorem}

\begin{proof}
Suppose first all intervals $U$ containing $l$ are such that $\inv{a}(U)\in \mathcal{Q}.$ Let $\{U_i\}\to l$ be any sequence of nested intervals converging to $l.$ Then $\nstdlm{\mathcal{Q}}(a_n)=l$ by algorithm \ref{algorithm defn of nonstd limit}.\par

Now suppose $\nstdlm{\mathcal{Q}}(a_n)=l.$ Let $U$ be any interval containing $l.$ Then either $\inv{a}(U)$ or $\inv{a}(\mathbb{R}\setminus U)$ is in $\mathcal{Q}.$ If the latter then for no subinterval of $U$ is the inverse image under $a$ in $\mathcal{Q}.$ Therefore the $\mathcal{Q}$-limit cannot be any point of $\mathbb{R}\setminus U.$
\end{proof}

Note that the dichotomy property of an ultrafilter guarantees this limit always exists and is unique, moreover it obeys the algebra homomorphism laws for limits, in particular $\nstdlm{\mathcal{Q}}(a_n+b_n) = \nstdlm{\mathcal{Q}}(a_n)+\nstdlm{\mathcal{Q}}(b_n)$.

\begin{lemma}\label{algebra homomorphism}
If $\nstdlm{\mathcal{Q}}(a_n)= l$, $\nstdlm{\mathcal{Q}}(b_n)= l'$ then $\nstdlm{\mathcal{Q}} (a_n+b_n)=l+l'.$
\end{lemma}
\begin{proof}
Note since for every open neighborhood $U$ of $l+l'$ there exist neighborhoods $V,V$' of $l$ and $l'$ respectively such that $V+V'\subset U$ it suffices to show that $(a+b)^{-1}(V+V')\in Q$. Let $V,V'$ be as above, and let $m\in a^{-1}(V)\cap b^{-1}(V').$ Then there exist $a_m,b_m$ such that $a_m\in V$ and $b_m\in V'.$ Hence $a_m+b_m\in V+V'$, moreover, $m\in (a+b)^{-1}(V+V').$ Therefore $a^{-1}(V)\cap b^{-1}(V')\subseteq (a+b)^{-1}(V+V').$ Hence $(a+b)^{-1}(V+V')\in Q$ by the monotonicity and closure under intersection properties of ultrafilters. The lemma follows from theorem \ref{inverse image characterization of nonstandard limits}.
\end{proof}

To define the nonstandard limit of a function it is necessary to specify not only an ultrafilter $\mathcal{Q}$ on $\N$ but also a sequence, say $\{\varepsilon_n\}$, tending to $0.$ This inspires the following definition for the nonstandard limit of a function.

\begin{definition}
Let $\{a_n\}$ be a bounded sequence in the domain of $f:\R^m\to\R^n$. Let $f_n = f(a_n)$ be bounded. Then $\nstdlm{\mathcal{Q},a_n}f(x) = \nstdlm{\mathcal{Q}}f_n.$
\end{definition}

Recall, from lemma \ref{dyadic cubes lemma}, that for $2^{-(k+1)}<\varepsilon\leq 2^{-k}$ and $A$ a bounded subset of $\R^n$ that $S_A(2^{-k})$ was defined as the minimal number of dyadic cubes of side length $2^{-k}$ covering $A.$ In what follows let $S_A(\varepsilon)=S_A(2^{-k})$ with $k$ as above.

\begin{definition}\label{nonstd mink dim}
Let $\mathcal{Q}$ be an ultrafilter on $\N,$ $\{\varepsilon_n\}$ be a sequence of positive reals tending to $0.$ Let $X\subset\R^n$ be bounded. Then the \textit{nonstandard Minkowski dimension} of $X$ relative to $\mathcal{Q},\{\varepsilon_n\}$ is
\begin{align}
    \dim\limits_{\mathcal{Q},\varepsilon_n}(X) = \lim\limits_{\mathcal{Q},\varepsilon_n} \frac{\log S(\varepsilon_n)}{\log\frac{1}{\varepsilon_n}}.
\end{align}
\end{definition}

From the algebraic homomorphism properties of the nonstandard limits we can establish the main result of this paper, namely the product summability of the nonstandard Minkowski dimension. Compare definition \ref{nonstd mink dim} with \cite[theorem 4.3]{potgieter}.

\begin{theorem}\label{nonstdmink product summable}
Let $X\subset \R^m, Y\subset \R^n,$ $\mathcal{Q},\{\varepsilon_n\}$ as in definition \ref{nonstd mink dim}. Then
\begin{align*}
    \dim\limits_{\mathcal{Q},\varepsilon_n} (X\times Y) = \dim\limits_{\mathcal{Q},\varepsilon_n} (X) + \dim\limits_{\mathcal{Q},\varepsilon_n} (Y).
\end{align*}
\end{theorem}
\begin{proof}
Note $\frac{\log(S_{X\times Y}(\varepsilon))}{\log(\frac{1}{\varepsilon})} = \frac{\log(S_X(\varepsilon))}{\log(\frac{1}{\varepsilon})}+\frac{\log(S_Y(\varepsilon))}{\log(\frac{1}{\varepsilon})}$ by lemma \ref{lemma s atimesb}. The result follows from lemma \ref{algebra homomorphism}.
\end{proof}

We now have a dimension, $\dim\limits_{\mathcal{Q},\varepsilon_n},$ which is product summable and exists for every bounded set, since the nonstandard limit always exists. Moreover, the nonstandard Minkowski dimension agrees with the Minkowski dimension when it exists.

\section{Example of Two Sets Which Are Not Product Summable}\label{sets not product summable section}
In \textit{The Geometry of Fractal Sets} Falconer discusses an example of two subsets $A,B$ of $[0,1]$, each of which have Hausdorff dimension $\dimh(A)=\dimh(B) = 0$, but which have a Cartesian product of dimension at least $1$ \cite[page 73]{falconer1986geometry}. Falconer constructs such sets by using a sequence of subsets for which the Hausdorff dimension tends to $0$. For more on the Hausdorff dimension, consult \cite{falconer1986geometry}. Here we present a generalization which is an interesting case of when the nonstandard Minkowski dimension, the Hausdorff dimension, and standard upper Minkowski dimension fundamentally differ, because of their respective product-summability.\par

Let $A_1,B_1,A_2,B_2,...$ be a partition of $\N,$ so $A_1\bigcup B_1\bigcup A_2\bigcup B_2\bigcup ... = \{1,2,3,...\}$, with sets $A_i,B_j$ which are the intersections of connected sets in $\R$ with $\N$.\par

We consider expansions in base $d$. Define the sets $A,B$ as the subsets of the unit interval whose elements have $i$th digits missing $B_i$ in base $d,$ so

\begin{align}
    A &= \{x=\sum\limits_{i=1}\limits^\infty a_id^{-i},\textrm{ }a_i = 0\textrm{ if }i\in B_k\textrm{ for some }k\}.\end{align} Define $B$ similarly,
\begin{align}
B &= \{x=\sum\limits_{i=1}\limits^\infty b_id^{-i},\textrm{ }b_i = 0\textrm{ if }i\in A_k\textrm{ for some }k\}.
\end{align}

We use closed intervals of length $d^{-(m+1)}<\varepsilon \leq d^{-m}$ to cover $A.$ Note that $N_A(\varepsilon)$ is the number of choices of digits up to $a_m,$ because intervals of length $\varepsilon$ are insensitive to variations in size less than or equal to $d^{-(m+1)}.$ Denote the number of different combinations in $A$ and $B$ by $f_A$ and $f_B$ respectively, so $f_A = \abs{\left(\bigcup_{i=1}^\infty A_i\right) \bigcap\{1,...,m\}}$ and $f_B = \abs{\left(\bigcup_{i=1}^\infty B_i\right) \bigcap\{1,...,m\}}.$ Note $f_A+f_B=m$. Then $N_A(\varepsilon)$ and $N_B(\varepsilon)$ are, for $d^{-(m+1)}<\varepsilon\leq d^{-m}$,

\begin{align}
    N_A(\varepsilon) = d^{f_A} \hspace{1.5em}\textrm{and}\hspace{1.5em} N_B(\varepsilon)=d^{f_B}.
    \end{align}
If $\varepsilon=d^{-m}$ then
\begin{align*}
   \frac{\log N_A(\varepsilon)}{\log (\frac{1}{\varepsilon})} =\frac{f_A}{m}\hspace{1.5em}\textrm{and}\hspace{1.5em}
    \frac{\log N_B(\varepsilon)}{\log (\frac{1}{\varepsilon})} =\frac{f_B}{m}.
\end{align*}

We can construct the sets $A$ and $B$ so the function $\frac{\log N(\varepsilon)}{\log(1/\varepsilon)}=\frac{f}{m}$ oscillates between $0$ and $1$ as $\varepsilon$ tends to $0$, by controlling the size of each set $A_i$ or $B_i$ (as in the example below). When $f_A\approx m$ the upper Minkowski dimension of $A$ will be observed to be near $1$ while the dimension of $B$ will be observed to be near zero. Hence the limit infimum is $0$ and the limit supremum is $1$ for both, by construction and because $f_A+f_B=m$. Thus the limit does not exist for these sets.

Because the upper Minkowski dimension is a limit supremum, $\dimup(A)=\dimup(B) = 1.$ Similarly, the Hausdorff dimension is an infimum, so $\dimh(A)=\dimh(B) = 0.$ 

As an example, let $A_1=\{1\}$, $B_1 = \{2\}$, and for $n\geq1$ define $\abs{A_{n+1}} =n\abs{\cup_{i=1}^n(A_i\cup B_i)}$ and $\abs{B_{n+1}} = n\abs{\cup_{i=1}^n(A_i\cup B_i)\cup A_{n+1}}$. Then $A_2=\{3,4\},$ $B_2 = \{5,6,7,8\},$ and the sets are completely determined and each next block of digits is a multiple of the lengths of all the previous blocks combined. Let $m=\max A_{n+1}.$ Then
\begin{align*}
    f_A&\geq m-\frac{m}{n+1}=\frac{n}{n+1}m\\
    f_B&\leq \frac{m}{n+1}.
\end{align*}
Then $\dimup(A)$ is observed to be near $1$ when measured just as $A_{n+1}$ is complete, while $\dimup(B)$ when measured at this scale would be near $0.$ Note $\dim_{\mathcal{Q}}(A)$ and $\dim_{\mathcal{Q}}(B)$ exist and are between $0$ and $1$. Note there is some freedom to choose $\dim_A$ by choosing $\varepsilon_n.$\par

\subsection{The Dimensions of $A\times B$}
Here we demonstrate by counterexample that the Hausdorff and upper Minkowski dimensions are not product-summable. Before doing so, we note the following inequality between the upper Minkowski dimension and the Hausdorff dimension. This results, essentially, from the fact that the Hausdorff dimension is defined in terms of an infimum over all possible covers while the upper Minkowski dimension is computed using sets of a fixed size \cite[equation 1.2.3]{bishop2017fractals}.

\begin{theorem}\label{minkgeqhaus}
Let $A\subset \R^n$ be bounded. Then

\begin{equation}\label{dimhleqdimup}
    \dimup(A)\geq\dimh(A).
\end{equation}
\end{theorem}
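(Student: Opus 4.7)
The plan is to compare the two dimensions via the outer content/measure characterizations already in hand: by Theorem \ref{upminksminkinf} we have $\dimup(A) = \inf\{s \mid \mathscr{M}^s(A) = 0\}$, and by Definition \ref{dimh} we have $\dimh(A) = \inf\{s \mid \mathscr{H}^s(A) = 0\}$. Consequently, it suffices to show the pointwise inequality $\mathscr{H}^s(A) \leq \mathscr{M}^s(A)$ for every $s > 0$, because then $\mathscr{M}^s(A) = 0$ forces $\mathscr{H}^s(A) = 0$, giving the set inclusion $\{s \mid \mathscr{M}^s(A) = 0\} \subseteq \{s \mid \mathscr{H}^s(A) = 0\}$, and infima reverse under inclusion to yield $\dimh(A) \leq \dimup(A)$.

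To establish $\mathscr{H}^s(A) \leq \mathscr{M}^s(A)$, I would fix $\epsilon > 0$ and take a minimal cover $\{U_i\}$ of $A$ by balls of radius $\epsilon$, as used to define $\mathscr{M}^s_\epsilon(A) = N_A(\epsilon)(2\epsilon)^s$. Each $U_i$ has diameter $2\epsilon$, so the family $\{U_i\}$ qualifies as one of the covers in the infimum defining $\mathscr{H}^s_{\epsilon'}(A)$ for any $\epsilon'$ with $2(2\epsilon) \leq \epsilon'$ (under the excerpt's convention that $\{V_i\}$ is an $\epsilon'$-cover when $2|V_i| \leq \epsilon'$). Then
\begin{equation*}
\mathscr{H}^s_{4\epsilon}(A) \;\leq\; \sum_i |U_i|^s \;=\; N_A(\epsilon)(2\epsilon)^s \;=\; \mathscr{M}^s_\epsilon(A).
\end{equation*}

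Now let $\epsilon \to 0^+$. Because $\mathscr{H}^s_\delta(A)$ is monotone in $\delta$, the limit $\lim_{\delta \to 0^+} \mathscr{H}^s_\delta(A) = \mathscr{H}^s(A)$ exists, and in particular $\lim_{\epsilon \to 0^+} \mathscr{H}^s_{4\epsilon}(A) = \mathscr{H}^s(A)$. Passing to the limit superior on the right-hand side gives
\begin{equation*}
\mathscr{H}^s(A) \;=\; \lim_{\epsilon \to 0^+} \mathscr{H}^s_{4\epsilon}(A) \;\leq\; \limsup_{\epsilon \to 0^+} \mathscr{M}^s_\epsilon(A) \;=\; \mathscr{M}^s(A),
\end{equation*}
which is the desired inequality.

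The argument is essentially bookkeeping, so no single step should pose a real obstacle; the only subtlety is keeping the two cover conventions straight (balls of radius $\epsilon$ have diameter $2\epsilon$, and the paper's $\epsilon$-cover condition is $2|V_i| \leq \epsilon$), which is why the factor of $4$ appears when realizing a minimal $\epsilon$-ball cover as a Hausdorff cover. This constant factor is harmless because $\mathscr{H}^s_\delta(A) \to \mathscr{H}^s(A)$ as $\delta \to 0^+$ regardless of the scale.
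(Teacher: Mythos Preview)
Your proof is correct and follows essentially the same route as the paper: compare $\mathscr{H}^s_\delta(A)$ to $\mathscr{M}^s_\epsilon(A)$ by observing that a minimal cover by $\epsilon$-balls is an admissible Hausdorff cover, pass to the limit to get $\mathscr{H}^s(A)\le\mathscr{M}^s(A)$, and then conclude via the infimum characterizations. If anything you are more careful than the paper about the cover convention---the paper simply asserts that a cover by radius-$\epsilon$ balls is ``an $\epsilon$ cover,'' whereas you track the factor of $4$ coming from $2|U_i|\le\epsilon'$; this extra care is harmless and, as you note, washes out in the limit.
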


If we define $A+B = \{x+y|x\in A,\textrm{ } y\in B\},$ then $\dimm(A+B) = 1$ because, for all $y\in [0,1],$ there exists an $a\in A$ and a $b\in B$ such that $a+b = y$; therefore $A+B = [0,1]$ which is a 1-dimensional set. And indeed,
\begin{align}
\frac{\log N_A(\varepsilon)}{\log(1/\varepsilon)}+\frac{\log N_B(\varepsilon)}{\log(1/\varepsilon)} = \frac{|A_1|+|A_2|+...+|A_k\cap\{1,...,m\}|}{m}+\frac{|B_1|+...+|B_{k-1}|}{m} = 1.
\end{align}
We start with the Hausdorff dimension $\dimh(A\times B).$ Note that projections are distance-nonincreasing, and so do not increase the Hausdorff outer measure. Let $\proj((x_0,y_0))$ be the orthogonal projection of $(x_0,y_0)\in\R^2$ onto the line $y=x$, so $\proj((x_0,y_0)) = \left(\frac{1}{2}(x+y),\frac{1}{2}(x+y)\right)$. Note that $\proj(A\times B)$ has the same dimension as the interval $[0,1]$. Hence $\dimh(A\times B)\geq\dimh(\proj(A\times B))\geq 1,$ so $\dimh(A\times B)>\dimh(A)+\dimh(B)$ and the Hausdorff dimension is not product-summable.\par

We can also show the upper Minkowski dimension is not product-summable. Let $d^{-(m+1)}<\varepsilon\leq d^{-m}$, then $N_{A\times B}(\varepsilon)\leq N_A(\varepsilon)N_B(\varepsilon)=d^{f_A+f_B}.$ Since $f_A+f_B=m$, we have $N_{A\times B}(\varepsilon)\leq d^{-m}= \varepsilon^{-1}.$ Hence $\dimup(A\times B)= 1,$ and $\dimup(A)=\dimup(B)=1$ as well.\par

It is here the nonstandard Minkowski dimension shows its utility. Let $\mathcal{Q}$ be an ultrafilter on $\N$ and $\varepsilon_n$ be a sequence of positive numbers tending to 0. Here $N_{A\times B}(\varepsilon) = S_{A\times B}(\varepsilon)$. Since $N_A(\varepsilon)N_B(\varepsilon) = d^m,$ we have $
    \dim\limits_{\mathcal{Q},\varepsilon_n}(A\times B) = \lim\limits_{\mathcal{Q},\varepsilon_n}\frac{\log(S_{A\times B}(\varepsilon_n))}{\log(1/\varepsilon_n)}
    =1.$ We conclude $\dim\limits_{\mathcal{Q},\varepsilon_n}(A\times B) = \dim\limits_{\mathcal{Q},\varepsilon_n}(A)+\dim\limits_{\mathcal{Q},\varepsilon_n}(B).$

\bibliographystyle{unsrt}  
\bibliography{main}  

\end{document}